\newcommand{\lie}[1]{\mathfrak{#1}}
\def\End{\operatorname{End}}
\def\coker{\operatorname{coker}}
\newcommand{\R}{\mathbb{R}}
\newcommand{\C}{\mathbb{C}}
\def\Hom{\operatorname{Hom}}
\def\tr{\operatorname{tr}}
\newcommand{\PP}{{\rm P}}
\newcommand{\OO}{{\mathcal O}}
\begin{document}
\allowdisplaybreaks

\renewcommand{\thefootnote}{}

\newcommand{\arXivNumber}{2210.17512}

\renewcommand{\PaperNumber}{003}

\FirstPageHeading

\ShortArticleName{A Note on Coupled Dirac Operators}

\ArticleName{A Note on Coupled Dirac Operators\footnote{This paper is a~contribution to the Special Issue on Differential Geometry Inspired by Mathematical Physics in honor of Jean-Pierre Bourguignon for his 75th birthday. The~full collection is available at \href{https://www.emis.de/journals/SIGMA/Bourguignon.html}{https://www.emis.de/journals/SIGMA/Bourguignon.html}}}

\Author{Nigel J.~HITCHIN}

\AuthorNameForHeading{N.J.~Hitchin}

\Address{Mathematical Institute, Woodstock Road, Oxford OX2 6GG, UK}
\Email{\href{mailto:hitchin@maths.ox.ac.uk}{hitchin@maths.ox.ac.uk}}
\URLaddress{\url{https://people.maths.ox.ac.uk/hitchin/}}

\ArticleDates{Received November 01, 2022, in final form January 10, 2023; Published online January 13, 2023}

\Abstract{The article considers some concrete solutions to the Dirac equation coupled to a vector bundle with connection, arising in the study of Yang--Mills equations and vector bundles on Riemann surfaces.}

\Keywords{Dirac equation; spinor; Yang--Mills; holomorphic structure; index theorem}

\Classification{53C07; 14D20}

\begin{flushright}
\begin{minipage}{65mm}
\it Dedicated to Jean-Pierre Bourguignon\\ on the occasion of his 75th birthday
\end{minipage}
\end{flushright}

\section{Introduction}
I first got to know Jean-Pierre in Paris in the mid 1970s when I visited Berger's group in Riemannian geometry. At the time I was following up my thesis topic, working on the Dirac operator in various contexts. In the years that followed (when Jean-Pierre was not occupied with his tireless work for the international community of mathematicians) he shared my interest and, as evidenced by \cite{Bourguignon,BourguignonJ/GauduchonP,BourguignonJ/HijaziO/MilhoratJ/MoroianuA/MoroianuS} for example, he was equally intrigued by questions involving spinors -- geometrically less accessible than vectors and tensors but often quite powerful in applications.

My early focus was on index theory and Weitzenb\"ock formulas, which provide existence theorems for solutions of the Dirac equation. In this note we consider some concrete examples of the Dirac operator coupled to a vector bundle with connection, and encounter situations where the null space has bigger then the expected value given by the index theorem, cases related to my mathematical interests over the years.

The first concerns the ${\rm SU}(2)$ Yang--Mills equations on~$S^4$. A vanishing theorem for the Dirac equation coupled to the fundamental representation gives one of the vector spaces which plays a fundamental role in the ADHM construction of the self-dual solutions, and in fact coupling to any representation the index theorem determines the dimension of the space of solutions. For many years an outstanding question was whether all Yang--Mills instantons were self-dual or anti-self-dual (indeed the papers \cite{BourguignonJ/LawsonH, BourguignonJ/LawsonH/SimonsJ} were a major contribution to the discussion) so it was natural to look for features exhibited by any solution. We describe in Section~\ref{section2} an observation from that period showing that, using solutions to the twistor equation, a non-self-dual instanton has at least a 4-dimensional space of solutions (of either chirality) to the Dirac equation coupled to the adjoint representation. The existence of non-self-dual solutions has since been proved \cite{SadunL/SegertJ, SibnerL/SibnerR/UhlenbeckK} and since the latter example is reasonably explicit this gives a construction.

The second case relates to the Dirac operator on a Riemann surface, which is the $\bar\partial$-operator acting on sections of a square root $K^{1/2}$ of the canonical bundle. Coupling to a vector bundle~$E$ with connection, the null space can be identified with holomorphic sections of $E\otimes K^{1/2}$. If~$E$ has degree zero the index theorem (here the Riemann--Roch theorem) gives no information about existence of solutions. When~$E$ is associated to a symplectic representation of a group $G$ then the moment map applied to a non-zero section $\psi$ of $E\otimes K^{1/2}$, when it exists, gives a Higgs field~-- a section of $\lie{g}\otimes K$. These are important objects and, following the suggestion of physicists, this issue was investigated further in~\cite{Hitchin2}. Here, in Section~\ref{section3}, we recall from that paper the construction of $\psi$ for an irreducible symplectic representation of ${\rm SU}(2)$, using points of finite order in an abelian variety.

By contrast, when $E$ arises from a representation with an orthogonal structure, and has odd rank, the mod~2 index theorem of Atiyah and Singer guarantees the existence of such a $\psi$ depending on whether the spin structure is of odd or even type. We investigate in Section~\ref{section4} the construction of such sections when~$E$ is the adjoint bundle for the group ${\rm SU}(2)$, using some classical descriptions of moduli spaces of stable bundles in genus 2.

\section{The Yang--Mills equations}\label{section2}
Let $E$ be a vector bundle on a Riemannian 4-manifold $M$. A connection $A$ on $E$ defines an exterior covariant derivative ${\rm d}_A\colon \Omega^p(M, E)\rightarrow \Omega^{p+1}(M, E)$ and ${\rm d}_A^2=F_A\in \Omega^2(M,\End E)$ is its curvature. The Yang--Mills equation for~$A$ is ${\rm d}_A\ast F_A=0$, where $\ast$ is the Hodge star operator on 2-forms. Since $\ast^2=1$ on 2-forms we have a decomposition into eigenspaces $\Lambda^2T^*=\Lambda^2_+\oplus \Lambda^2_-$ and accordingly $F_A=F_A^++ F_A^-$ so the equation is ${\rm d}_A(F_A^+- F_A^-)=0$. The Bianchi identity ${\rm d}_A(F_A^++F_A^-)=0$ implies that the Yang--Mills equation is equivalent to ${\rm d}_AF_A^+=0={\rm d}_AF_A^-$ and so if $F_A^-=0$, meaning the connection is self-dual, we have automatically a solution.

When $M$ is a spin manifold we have a spinor bundle $S=S^+\oplus S^-$ with connection induced from the Levi-Civita connection and the Clifford action $T^*\otimes S^+\rightarrow S^-$, $T^*\otimes S^-\rightarrow S^+$. The Dirac operator $\slashed D$ on $S^+$ is then defined as the composition $C^{\infty}(S^+)\stackrel{\nabla}\rightarrow C^{\infty}(T^*\otimes S^+)\rightarrow C^{\infty}(S^-)$. Using a connection on an auxiliary vector bundle $E$ to define one on $ S^+\otimes E$ we obtain the {\it coupled} Dirac operator $\slashed D_A\colon C^{\infty}(S^+\otimes E)\rightarrow
C^{\infty}(S^-\otimes E)$.

The index of this operator is $\dim \ker \slashed D_A-\dim \coker \slashed D_A=\mathrm{ch}(E)\hat A[M]$. When the connection is self-dual, the curvature $F_A=F_A^+$ acts trivially in the Weitzenb\"ock formula for $\slashed D_A^*$ leaving just one quarter of the scalar curvature and so if $M$ has positive scalar curvature, like $M=S^4$, $\ker \slashed D_A^*=0$. Then the dimension of the null space of $\slashed D_A$ is given by the index.

There is another differential operator defined on spinors, the {\it twistor operator} $D$. This is obtained from $\nabla\colon C^{\infty}(S)\rightarrow C^{\infty}(T^*\otimes S)$ by orthogonal projection onto the kernel of $T^*\otimes S\rightarrow S$. In terms of a local orthonormal basis this is
\begin{displaymath}
D\psi=\sum_{i=1}^4 \big(e_i\otimes \nabla_i\psi +\tfrac{1}{4}e_i\otimes e_i\cdot \slashed D\psi\big),
\end{displaymath}
where the dot denotes Clifford multiplication.
This is an overdetermined system but on the sphere $S^4$ it has a 4-dimensional space of solutions for $\psi\in S^+$ and similarly for $S^-$. It is in fact conformally invariant and by stereographic projection the solutions on $\R^4$ are affine linear spinors $x\cdot \psi_1+\psi_2$, where $\psi_1$, $\psi_2$ are constant spinors.

The Dirac operator is also conformally invariant acting on spinors with weight 3/2 \cite{Hitchin1} whereas the spinors for the twistor operator have weight $-1/2$. The Yang--Mills equations are conformally invariant because the Hodge star is conformally invariant in the middle dimension. In the context of conformal invariance of operators this mean that the exterior derivative on 2-forms has conformal weight~$2$. A~2-form acts on a spinor by Clifford multiplication so that a~2-form with weight 2 acting on a spinor with weight $-1/2$ gives a spinor with weight $3/2$. The following proposition is thus conformally invariant:

\begin{proposition}\samepage Let $F_A^-$ be the anti-self-dual component of a non-self-dual solution to the Yang--Mills equations on $S^4$, and $\psi\in C^{\infty}(S^-)$ a solution to the twistor equation. Then $F_A^-\cdot \psi\in \C^{\infty}( S^-\otimes \End E)$ satisfies the coupled Dirac equation.
\end{proposition}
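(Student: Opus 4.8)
The plan is to expand $\slashed D_A(F_A^-\cdot\psi)$ directly in a local orthonormal frame and show that both resulting pieces vanish. Writing $\slashed D_A\phi=\sum_i e_i\cdot\nabla_i^A\phi$ and using that the Clifford module structure on $S^-\otimes\End E$ is parallel, the Leibniz rule gives
\begin{displaymath}
\slashed D_A(F_A^-\cdot\psi)=\sum_i e_i\cdot\big(\nabla_i^A F_A^-\big)\cdot\psi+\sum_i e_i\cdot F_A^-\cdot\nabla_i\psi ,
\end{displaymath}
where $\nabla^A$ is the connection on $\Lambda^2\otimes\End E$ built from the Levi-Civita connection and $A$, and the bare $\nabla$ on $\psi$ is the spinor connection. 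I would then treat the two terms separately: the first uses the Yang--Mills equation, the second uses the twistor equation.

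For the first term I would invoke the standard fact that Clifford-contracting the covariant derivative of an $\End E$-valued form reproduces, up to sign conventions, the Hodge--Dirac operator, $\sum_i e_i\cdot\nabla_i^A\alpha=({\rm d}_A+{\rm d}_A^\ast)\alpha$, under the identification of the Clifford bundle with the exterior algebra. Applied to $\alpha=F_A^-$ this turns the first term into $\big(({\rm d}_A+{\rm d}_A^\ast)F_A^-\big)\cdot\psi$. Here ${\rm d}_A F_A^-=0$ is precisely the content of the Yang--Mills equation together with the Bianchi identity as recalled above, while anti-self-duality $\ast F_A^-=-F_A^-$ forces ${\rm d}_A^\ast F_A^-=\pm\ast{\rm d}_A F_A^-=0$. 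Thus $F_A^-$ is ${\rm d}_A$-harmonic and the first term disappears.

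For the second term I would use that the twistor equation $D\psi=0$ is equivalent to $\nabla_i\psi=-\tfrac14 e_i\cdot\slashed D\psi$ for every $i$, which rewrites the term as $-\tfrac14\big(\sum_i e_i\cdot F_A^-\cdot e_i\big)\cdot\slashed D\psi$, the $\End E$ factor of $F_A^-$ playing no role in the Clifford products. The crucial input is then a purely algebraic, dimension-specific identity: for a $p$-form $\omega$ in dimension $n$ one has $\sum_i e_i\cdot\omega\cdot e_i$ equal to a scalar multiple of $(n-2p)\,\omega$, so it vanishes exactly in the middle degree $n=2p$; here $n=4$, $p=2$. This kills the second term outright, and we conclude $\slashed D_A(F_A^-\cdot\psi)=0$.

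The main obstacle — really the only non-formal point — is establishing these two Clifford-algebra facts with the correct signs: the twisted Hodge--Dirac identity, and the vanishing $\sum_i e_i\cdot\omega\cdot e_i=0$ for middle-degree forms in dimension four. I would verify the latter on a decomposable bivector $e_j\cdot e_k$ by commuting the flanking $e_i$ through and separating the cases $i\in\{j,k\}$ and $i\notin\{j,k\}$; it is this dimension-four coincidence, and not any finer property of $F_A^-$, that eliminates the second term, whereas anti-self-duality enters only through the coclosedness used in the first. Finally it is worth recording that the construction is non-trivial precisely because $\psi\in S^-$: an anti-self-dual $2$-form acts non-trivially on $S^-$ (while $F_A^+\cdot\psi$ would vanish identically), so $F_A^-\cdot\psi$ is a genuine, generically non-zero, solution of the coupled Dirac equation.
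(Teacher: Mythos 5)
Your proposal is correct and follows essentially the same route as the paper: the same Leibniz expansion, with the first term killed by ${\rm d}_AF_A^-=0$ (your Hodge--Dirac identity $\sum_i e_i\cdot\nabla_i=({\rm d}_A+{\rm d}_A^\ast)$ together with ${\rm d}_A^\ast F_A^-=\pm\ast{\rm d}_AF_A^-$ is just a repackaging of the paper's identity $a\cdot\omega=a\wedge\omega-\ast(a\wedge\omega)$ for anti-self-dual $\omega$), and the second term killed via the twistor equation and the vanishing of $\sum_i e_i\cdot\omega\cdot e_i$. The only difference is cosmetic: you invoke the general middle-degree identity $\sum_i e_i\cdot\omega\cdot e_i=(-1)^{p+1}(n-2p)\,\omega$, valid for \emph{all} $2$-forms in dimension $4$, where the paper states the vanishing only for anti-self-dual forms.
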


\begin{proof} Let $e_1,\dots, e_4$ be a local orthonormal basis of 1-forms with volume form $e_1\wedge e_2\wedge e_3\wedge e_4$. Then a basis of anti-self-dual 2-forms is given by
$e_1\wedge e_2-e_3\wedge e_4$, $e_2\wedge e_3-e_1\wedge e_4$, $e_3\wedge e_1-e_2\wedge e_4$.
The Clifford algebra has relations $e_i\cdot e_j+e_j\cdot e_i=-2\delta_{ij}$ and the action of $F_A^-$ on $\psi$ is through the elements $e_1\cdot e_2-e_3\cdot e_4$ etc.\ in the algebra. Using this identification of the exterior forms with the Clifford algebra (as vector spaces) we have the following identities: if $\omega$ is an anti-self-dual 2-form and $a$ a 1-form then
\begin{displaymath}a\cdot\omega = a\wedge \omega-\ast(a\wedge \omega),\qquad \sum_{i=1}^4e_i\cdot \omega \cdot e_i=0.\end{displaymath}
Now
\begin{displaymath}\slashed D_A(F_A^-\cdot \psi)=\sum_{i=1}^4 e_i\cdot \nabla_iF_A^-\cdot \psi +e_i\cdot F_A^-\cdot \nabla_i \psi=\sum_{i=1}^4 e_i\cdot \nabla_iF_A^-\cdot \psi -\tfrac{1}{4}e_i\cdot F_A^-\cdot e_i \cdot \slashed D\psi,\end{displaymath}
using the fact that $\psi$ satisfied the twistor equation $\nabla_i\psi=-e_i\cdot \slashed D\psi /4$.
But $\sum_{i=1}^4e_i\cdot \nabla_iF_A^-={\rm d}_AF_A^--\ast {\rm d}_AF_A^-$ from the first identity and ${\rm d}_AF_A^-=0$ from the Yang--Mills equations so the first term is zero. The second term vanishes using the second identity.
\end{proof}
\begin{remark}

 In \cite{SibnerL/SibnerR/UhlenbeckK}, a non-self-dual solution was produced by a min-max argument for the trivial bundle on $S^4$. The index is zero here and so there is a canonical determinant section of the determinant line bundle which vanishes on those connections for which there exists a~solution to the coupled Dirac equation. The proposition shows that all non-self-dual Yang--Mills connections lie on this divisor. In fact since $\End E$ is a real bundle and $S^-$ is quaternionic the space of solutions is even dimensional so the determinant section is the square of a section of a Pfaffian bundle.\looseness=-1
\end{remark}

\begin{remark}
 The non-self-dual solution given in \cite{SadunL/SegertJ} has a high degree of symmetry -- it is invariant by the action of ${\rm SO}(3)$ on the unit sphere in the 5-dimensional irreducible representation on trace-free symmetric $3\times 3$ matrices. The proof of existence reduces to analysis of the boundary conditions for a system of ODEs. In principle we can write down the coupled spinor fields constructed as above but the resulting formula is not particularly rewarding.
\end{remark}
\begin{remark}
 If the connection is anti-self-dual then the proposition still yields solutions by applying the curvature $F_A=F_A^-$ to solutions of the twistor equation. Note that the index theorem gives in this case the dimension of the null space as $4k$, where $k$ is the Chern class \cite[Section~10]{AtiyahM/HitchinN/SingerI}. In particular for $k=1$ we see that all four solutions are produced this way. The connection in question is here the Levi-Civita connection on $S^-$ for a standard constant curvature metric on~$S^4$ .
\end{remark}

\section{Riemann surfaces -- the symplectic case}\label{section3}
The conformal invariance of the Dirac equation in two dimensions has a more important consequence since it can be defined in terms of the holomorphic structure and so becomes part of the algebraic geometry of a compact projective curve $C$ of genus $g$. A spin structure is defined by the holomorphic structure on a square root $K^{1/2}$ of the canonical bundle $K$ and the Dirac operator is just $\slashed D=\bar\partial\colon C^{\infty}\big(K^{1/2}\big)\rightarrow C^{\infty}\big(K^{1/2}\bar K\big)$.
In this case, the index theorem, which is the Riemann--Roch theorem, gives $\dim H^0\big(C,K^{1/2}\big)-\dim H^1\big(C, K^{1/2}\big)=0$.
When coupled to a vector bundle $E$ with connection, the $(0,1)$ component of the connection defines a $\bar\partial$-operator on $E$ and the null space of the coupled Dirac operator is $H^0\big(C,E\otimes K^{1/2}\big)$ and if $c_1(E)=0$ the index is again zero.

Suppose we are given a holomorphic principal $G$-bundle over $C$ and we take a symplectic representation to yield a vector bundle $E$. The moment map for a symplectic action of $G$ on a vector space $V$ is a quadratic map from $V$ to $\lie{g}^*$, the dual of the Lie algebra. Then, using a~bi-invariant metric, a holomorphic section of $E\otimes K^{1/2}$ gives a section of $\lie{g}\otimes K$. These are called Higgs fields and the moduli space of bundles together with Higgs fields has gained importance in various fields. The construction above of special Higgs fields was suggested by the physicist D.~Gaiotto \cite{Gaiotto} and described in~\cite{Hitchin2}.

We shall consider the irreducible symplectic representations of ${\rm SL}(2,\C)$. These are the symmetric powers $S^m\C^2$ for odd $m$ of the basic representation on $\C^2$ or equivalently the polynomials in one variable of odd degree $m$ under the action
$p(z)\mapsto (cz+d)^mp((az+b)/(cz+d)).$
 Set $m=2k-1$ and write $p(z)=a_0z^m+a_1z^{m-1}+\dots+a_m$, then the invariant symplectic form on the space $S^m\C^2$ is up to a constant
\begin{equation}
\omega=\sum_{\ell=0}^{k-1}(-1)^{\ell}\ell!(m-\ell)!{\rm d}a_i\wedge {\rm d}a_{m-i}.\label{symp}
\end{equation}
The even symmetric powers $S^{m}\C^2$ are the orthogonal irreducible representations and $m=2$ is the adjoint representation. The moment map can be viewed as $\mu\colon S^m\C^2\otimes S^m\C^2\rightarrow S^2\C^2$ given by contraction using the symplectic form.

 First consider $m=1$.
We start with a rank 2 holomorphic vector bundle $V$ with $\Lambda^2V$ trivial. Multiples of a holomorphic section $\psi$ of $V\otimes K^{1/2}$ generate a subbundle $L^*\subset V$ with quotient~$L$. Then $\psi$ is a section of the line bundle $L^*K^{1/2}$. This has degree $g-1-\deg L$. So to construct such a~$\psi$ we take an effective divisor $D$ of degree $< g-1$ and define $L$ to be $K^{1/2}(-D)$ and the vector bundle to be given by an extension class in $H^1\big(C,L^{-2}\big)=H^1(C,K(-2D))$.

The moment map in this case gives an everywhere nilpotent Higgs field $\Phi=\psi\otimes \psi$, but for $m>1$ this is not necessarily so and we can make use of the relation to Higgs bundles and adopt the spectral curve approach.
We start with a holomorphic section $q$ of $K^2$ with simple zeros and define the curve $S$ in the total space $\vert K\vert$ of $K$ (i.e., the cotangent bundle of $C$) by the equation $x^2=\pi^*q$. Here $\pi\colon \vert K\vert \rightarrow C$ is the projection and $x$ is the tautological section of $\pi^*K$ on the total space $\vert K\vert $. Then $S$ is a smooth curve of genus $4g-3$ and has a covering involution $\sigma$ defined by $x\mapsto-x$. The direct image of a line bundle $L$ on $S$ is a rank 2 bundle $V$ on $C$ and if $L\cong U\pi^*K^{1/2}$ and $\sigma^*U\cong U^*$ then $\Lambda^2V$ is trivial (see for example~\cite{BeauvilleA/NarasimhanM/RamananS}). The condition on $U$ can be interpreted as saying it lies in the Prym variety of the map $\pi\colon S\rightarrow C$, an abelian variety of dimension $4g-3-g=3g-3$.

Proposition~3 of \cite{Hitchin2} shows how to find a solution to the Dirac operator coupled to $S^mV$ from a line bundle of order $m$ on $S$ -- a point of finite order in the Prym variety. For simplicity we consider the value $m=3$. For our purposes we are not interested in the Higgs field, only its spectral curve.

\begin{proposition} Let $U$ be a non-trivial line bundle on $S$ such that $U^3$ is trivial. Then the direct image $V$ of $U\pi^*K^{1/2}$ admits a holomorphic section of $S^3V\otimes K^{1/2}$.
\end{proposition}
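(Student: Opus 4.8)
The plan is to realize the section as the image of a canonical multiplication map, the order-three hypothesis entering precisely to trivialize the relevant line bundle on $S$. Recall that $V=\pi_*L$ with $L=U\pi^*K^{1/2}$, and that there is a natural multiplication map $\mu\colon S^3V\to\pi_*\big(L^3\big)$: the product of three local sections of $L$ over $\pi^{-1}(W)$ is a local section of $L^3$ there, so $\pi_*L\otimes\pi_*L\otimes\pi_*L\to\pi_*\big(L^3\big)$ is defined and, being symmetric, factors through $S^3(\pi_*L)$. Now since $U^3$ is trivial we have $L^3=U^3\pi^*K^{3/2}\cong\pi^*K^{3/2}$, so by the projection formula $\pi_*\big(L^3\big)\cong\pi_*\pi^*K^{3/2}\cong K^{3/2}\oplus K^{1/2}$ (using $\pi_*\OO_S\cong\OO_C\oplus K^{-1}$ for the cover $x^2=\pi^*q$), the two summands being the $\sigma$-invariant and $\sigma$-anti-invariant parts. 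Composing $\mu$ with the projection onto the second summand yields a homomorphism $\phi\colon S^3V\to K^{1/2}$.

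First I would check that $\phi$ does not vanish identically. Over a point $z$ of $C$ that is not a branch point of $q$, the fibre is $V_z=L_{p_+}\oplus L_{p_-}$ for the two preimages $p_\pm$, and $\mu$ sends the cubes $e_\pm^3$ to the respective summands of $(L^3)_{p_+}\oplus(L^3)_{p_-}$, while killing the mixed monomials $e_+^2e_-$ and $e_+e_-^2$ (each such product vanishes at both sheets). Thus $\mu(e_+^3)$ is supported on a single sheet, so its $\sigma$-anti-invariant component is non-zero, and hence $\phi(e_+^3)\neq0$. Therefore $\phi$ is a non-zero element of $\Hom(S^3V,K^{1/2})=H^0\big(C,(S^3V)^*\otimes K^{1/2}\big)$.

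Finally I would convert $\phi$ into a section of the required bundle. Because $\Lambda^2V$ is trivial -- this is exactly the Prym condition $\sigma^*U\cong U^*$ -- the rank-two bundle $V$ carries a symplectic form, so $V\cong V^*$ and consequently each symmetric power is self-dual, $S^3V\cong(S^3V)^*$. Under this isomorphism $\phi$ corresponds to a non-zero $\psi\in H^0\big(C,S^3V\otimes K^{1/2}\big)$, which is the desired section.

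The main obstacle is the non-vanishing, and more precisely ensuring that the construction lands in the $K^{1/2}$ summand rather than the $K^{3/2}$ one; this is where the hypothesis $U^3\cong\OO$ is indispensable, since without it $L^3$ carries no preferred trivialization and $\pi_*\big(L^3\big)$ need not contain a copy of $K^{1/2}$ at all. I would also verify that $\mu$ and the projection remain genuine bundle maps across the branch locus, where the fibrewise splitting $V_z=L_{p_+}\oplus L_{p_-}$ degenerates; this is automatic, as both maps are defined sheaf-theoretically, and the generic non-vanishing already forces $\psi\neq0$.
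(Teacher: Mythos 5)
Your proof is correct, but it follows a genuinely different route from the paper's. The paper works upstairs on $S$: it filters $\pi^*S^3V$ via the extension $0\to L^*\to \pi^*V\to L\to 0$, represents the extension class in Dolbeault terms as $\bar\partial s/x$, solves the chain of $\bar\partial$-equations starting from the leading coefficient $a_0=ux^2$ (with $u$ a trivialization of $U^3$ satisfying $\sigma^*u=u^{-1}$), uses the non-triviality of $U$ to prove the successive extensions of the partial section are \emph{unique}, deduces $\sigma$-invariance from that uniqueness, and only then descends to $C$. You instead stay downstairs on $C$ and build a canonical homomorphism: the multiplication map $S^3\pi_*L\to\pi_*\big(L^3\big)$, the trivialization $L^3\cong\pi^*K^{3/2}$ (the only place $U^3\cong\OO$ enters), the projection-formula splitting $\pi_*\pi^*K^{3/2}\cong K^{3/2}\oplus K^{1/2}$ into $\sigma$-eigenbundles, and finally the symplectic self-duality $S^3V\cong\big(S^3V\big)^*$ coming from $\Lambda^2V\cong\OO$; your fibrewise check over a non-branch point, where the mixed monomials $e_+^2e_-$, $e_+e_-^2$ die and $\mu\big(e_\pm^3\big)$ has non-zero anti-invariant part, correctly establishes $\phi\ne 0$. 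Your argument buys brevity and manifest canonicity: since everything is functorial, no descent or uniqueness argument is needed -- which is why you never use the non-triviality of $U$ (the paper needs it only to force $\sigma$-invariance via uniqueness) -- and the method visibly extends to any odd symmetric power $S^mV$ with $U^m$ trivial. What the paper's explicit construction buys in exchange is fine control over the section itself (e.g., that its leading term is $ux^2$, vanishing on $x=0$), which is what feeds the subsequent remarks identifying $S$ as the spectral curve of the Higgs field $\mu(\psi)$ and the converse in Proposition~2 of~\cite{Hitchin2}. Two points you should make explicit: both proofs quietly use the Prym condition $\sigma^*U\cong U^*$, implicit in the section's setup (you need it for $\Lambda^2V\cong\OO$, the paper for $\sigma^*u=u^{-1}$), and your identification $\big(S^3V\big)^*\cong S^3\big(V^*\big)$ is the standard characteristic-zero isomorphism, harmless here but worth naming.
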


\begin{proof} The definition of direct image $V=\pi_*L$ is that for each open set $W\subset C$, $H^0(W, \pi_*L)=H^0\big(\pi^{-1}(W), L\big)$ so that evaluation of a local section of $L$ expresses the pull-back $\pi^*V$ as an extension{\samepage
\begin{equation}
0\rightarrow L^*\rightarrow \pi^*V\rightarrow L\rightarrow 0.
\label{V1}
\end{equation}
and this induces a filtration of $\pi^*S^3V$ into subbundles $V_0\subset V_1\subset V_2\subset V_3$ with $V_0\cong L^{-3}$.}

Consider the subbundle $V_1$. Comparing with the filtration by degree of polynomials $a_0z^3+a_1z^{2}+a_2z+a_3$ it is the subspace $a_0=a_{1}=0$, the polynomials which vanish to order $2$ at $\infty$. It is maximally isotropic with respect to the symplectic form \eqref{symp} with $m=3$.
It follows that $\pi^*S^3V$ is an extension
\begin{equation}
0\rightarrow V_1\rightarrow \pi^*S^3V\rightarrow V_1^*\rightarrow 0
\label{ext}
\end{equation}

Now $\pi^*V$ is invariant by $\sigma$ so from \eqref{V1} we have another extension
\begin{displaymath}0\rightarrow \sigma^*L^*\rightarrow \pi^*V\rightarrow \sigma^*L\rightarrow 0\end{displaymath}
and this splits \eqref{V1} outside the divisor given by $x=0$. The extension class is then supported on $x=0$, the fixed point set of $\sigma$, where $L\cong \sigma^*L$. In Dolbeault terms
we have a section of $L\sigma^*L^*$ on $x=0$, extend it holomorphically to a neighbourhood and then everywhere as a $C^{\infty}$ section $s$ supported in a neighbourhood of the fixed point set. The extension class in $H^1\big(S,L^{-2}\big)$ is then represented by $\bar\partial s/x\in \Omega^{0,1}\big(S, L^{-2}\big)$.

The Dolbeault representative for the holomorphic structure of $V$ as an extension gives a $C^{\infty}$-splitting $\pi^*V=L\oplus L^*$ and a $\bar\partial $-operator $\bar\partial +\alpha$, where $\alpha\in \Omega^{0,1}\big(S, L^{-2}\big)$. Then $\pi^*S^3V=L^{3}\oplus L\oplus L^{-1}\oplus L^{-3}$ and a holomorphic section of $\pi^*\big(S^3V\otimes K^{1/2}\big)$ can be written as $a_0z^3+a_1z^{2}+a_1z+a_0$, where
\begin{displaymath}\bar\partial a_0=0,\qquad \bar\partial a_1+3\alpha a_0=0,\qquad \bar\partial a_2+2\alpha a_1=0, \qquad \bar\partial a_3+\alpha a_2=0\end{displaymath}
and $a_0\in \Omega^0\big(S, L^3\pi^*K^{1/2}\big)$, $a_1\in \Omega^0\big(S, L\pi^*K^{1/2}\big)$, etc.

The line bundle $U^3$ is trivial. Let $u$ be a trivialization such that $\sigma^*u=u^{-1}$ and set $a_0=ux^2$. Then
$3\alpha a_0=3u(\bar\partial s) x=\bar\partial (3usx)$
so we can take $a_1=-3usx$ and extend $a_0$ to get a holomorphic section $v$ of $V_1^*\otimes \pi^*K^{1/2}$. This vanishes on $x=0$ because $a_0$ and $a_1$ do. Any two choices of $a_1$ differ by an element of $H^0\big(S, L\pi^*K^{1/2}\big)$ and if they are divisible by $x$ then this is equivalent to sections of $L\pi^*K^{-1/2}=U$. Thus if $U$ is non-trivial there is a unique extension divisible by $x$.

To extend $v$ to a section of $\pi^*\big(S^3V\otimes K^{1/2}\big)$ we observe that the vector bundle extension~\eqref{ext} is defined by a class in $H^1\big(S, \Hom(V_1^*,V_1)\big)$ also of the form $\bar\partial a/x$ and since $v$ as constructed is divisible by~$x$,~$v$ extends. In this case any two extensions differ by $v'\in H^0\big(S,V_1\otimes \pi^*K^{1/2}\big)$.

But we have
 \begin{displaymath}0\rightarrow L^{-3}=V_{0}\rightarrow V_{1}\rightarrow L^{-1}\rightarrow 0\end{displaymath}
and $L^{-3}\cong \pi^*K^{-3/2}$, so $V_0\otimes \pi^*K^{1/2}$ is a line bundle of negative degree and so has no sections. Moreover $L^{-1}\pi^*K^{1/2} \cong U^{-1}$ has degree zero and is non-trivial so this has no holomorphic sections either. It follows from the exact cohomology sequence that $H^0\big(S, V_1\otimes \pi^*K^{1/2}\big)=0$ and there is a unique extension.

 This gives us a section $v''$ of $\pi^*\big(S^3V\otimes K^{1/2}\big)$ on $S$. But our choice of trivialization $u$ implies that $\sigma$ takes $ux^2$ to $u^{-1}x^2$ and by uniqueness $\sigma^*v''=v''$. Since $\pi^*V$ is pulled back from~$C$, $v''$~descends to a section $\psi$ of $S^3V\otimes K^{1/2}$.
\end{proof}

\begin{remark}
Proposition 2 in \cite{Hitchin2} is the converse of this: a generic section of $S^3V\otimes K^{1/2}$ is obtained this way.
 \end{remark}
\begin{remark}
In the same article it is also shown that the curve $S$ in the above construction is in fact the spectral curve of the Higgs field~$\mu(\psi)$.
\end{remark}

\section{Riemann surfaces -- the orthogonal case}\label{section4}
\subsection{Parity}\label{section4.1}
If we attempt a construction like the one above for orthogonal representations of ${\rm SL}(2,\C)$ then the homomorphism for $m$ even $S^m\C^2\otimes S^m\C^2\rightarrow S^2\C^2$ is skew-symmetric. Then we need at least two solutions of the coupled Dirac operator to combine into a Higgs field. We shall not pursue this here but instead focus on the parity of the dimension of the null space. The integer index theorem via Riemann--Roch gives us no information but the mod~2 index theorem~\cite{AtiyahM/SingerI} does:
\begin{proposition} \label{mod2} Let $E$ be a vector bundle over $C$ with a non-degenerate symmetric bilinear form. If $E$ has odd rank and Stiefel--Whitney class $w_2(E)=0$ then
$\dim H^0\big(C, E\otimes K^{1/2}\big) = \dim H^0\big(C,K^{1/2}\big)$ {\rm mod}~$2$. If $w_2(E)\ne 0$ then the dimension is $1+\dim H^0\big(C,K^{1/2}\big)$ {\rm mod}~$2$.
\end{proposition}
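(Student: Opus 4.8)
The plan is to read the statement as an instance of the mod~2 index theorem applied to the coupled operator $\bar\partial_E$ on $W:=E\otimes K^{1/2}$, whose kernel is $H^0\big(C,E\otimes K^{1/2}\big)$. The non-degenerate symmetric form on $E$ together with the multiplication $K^{1/2}\otimes K^{1/2}\to K$ furnishes a symmetric $K$-valued pairing $q\colon W\otimes W\to K$, i.e.\ an isomorphism $W\cong W^*\otimes K$. Under Serre duality $q$ identifies $\coker\bar\partial_E\cong H^1(C,W)$ with the dual of $\ker\bar\partial_E=H^0(C,W)$; because this pairing is \emph{symmetric} (this is where the orthogonal structure of $E$ enters, in contrast to the symplectic case of Section~\ref{section3}), the operator $\bar\partial_E$ together with the conjugate-linear structure of the Dolbeault complex is realised as a real skew-adjoint elliptic operator in the sense of the mod~2 index theorem. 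By Atiyah--Singer~\cite{AtiyahM/SingerI} the quantity $\dim H^0\big(C,E\otimes K^{1/2}\big)\bmod 2$ is then a deformation invariant, depending only on the smooth isomorphism class of the orthogonal bundle and on the spin structure $K^{1/2}$; the odd rank will enter only in the final count.

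It then remains to evaluate this invariant on convenient holomorphic models, one for each topological type. For $r\ge 3$ an oriented orthogonal bundle of odd rank $r$ has structure group $SO(r)$ and is classified topologically by $w_2\in H^2(C,\mathbb{Z}/2)\cong\mathbb{Z}/2$, so there are just two cases. When $w_2(E)=0$ I would take the trivial orthogonal bundle $E=\OO^{\oplus r}$, for which $H^0\big(C,E\otimes K^{1/2}\big)=H^0\big(C,K^{1/2}\big)^{\oplus r}$ and hence $\dim H^0\big(C,E\otimes K^{1/2}\big)=r\dim H^0\big(C,K^{1/2}\big)\equiv \dim H^0\big(C,K^{1/2}\big)\pmod 2$, since $r$ is odd.

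For $w_2(E)\ne 0$ I would use the hyperbolic model $E=\OO^{\oplus(r-2)}\oplus\ell\oplus\ell^{-1}$, with $\ell$ a line bundle of odd degree and $q$ pairing $\ell$ with $\ell^{-1}$; as an orthogonal bundle this has $w_2=\deg\ell\equiv 1$. The trivial summands contribute $(r-2)\dim H^0\big(C,K^{1/2}\big)\equiv\dim H^0\big(C,K^{1/2}\big)$, again because $r-2$ is odd. For the remaining pair, Serre duality gives $h^1\big(\ell\otimes K^{1/2}\big)=h^0\big(\ell^{-1}\otimes K^{1/2}\big)$, so Riemann--Roch yields $h^0\big(\ell\otimes K^{1/2}\big)-h^0\big(\ell^{-1}\otimes K^{1/2}\big)=\deg\ell$ and therefore $h^0\big(\ell\otimes K^{1/2}\big)+h^0\big(\ell^{-1}\otimes K^{1/2}\big)\equiv\deg\ell\equiv 1\pmod 2$. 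Summing the contributions gives $\dim H^0\big(C,E\otimes K^{1/2}\big)\equiv\dim H^0\big(C,K^{1/2}\big)+1\pmod 2$, as claimed.

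The computation is routine once the framework is in place; the step I expect to be the main obstacle is justifying that $\bar\partial_E$ genuinely falls under the mod~2 index theorem. This means pinning down the correct reality structure in real dimension two, where the spinor module is quaternionic, and checking that tensoring with the \emph{odd-rank} orthogonal bundle places the operator in the class carrying a $\mathbb{Z}/2$-valued index rather than an automatically even-dimensional kernel. One must also be careful that the clean dependence on $w_2$ alone presupposes the orientation $w_1(E)=0$: already in rank one, $E\otimes K^{1/2}$ is another theta characteristic whose parity is an Arf invariant and does vary with the underlying $2$-torsion class, so it is precisely the oriented hypothesis that suppresses any $w_1$-dependence in the stated formula.
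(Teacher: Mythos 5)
Your proposal is correct and follows essentially the same route as the paper: invoke the Atiyah--Singer mod~2 index theorem to make $\dim H^0\big(C,E\otimes K^{1/2}\big)$ mod~$2$ a $C^{\infty}$ deformation invariant (a step the paper likewise takes as given, citing \cite{AtiyahM/SingerI} and \cite{Oxbury}), then evaluate on topological models, deforming to the trivial bundle when $w_2(E)=0$. Your explicit hyperbolic model $\OO^{\oplus(r-2)}\oplus\ell\oplus\ell^{-1}$ with $\deg\ell$ odd just fills in the case the paper dismisses with ``similarly for the other case,'' and your remark that orientability $w_1(E)=0$ is implicitly needed is a correct observation consistent with the paper's use of $\End_0V$.
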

Once one has established that $\dim H^0\big(C, E\otimes K^{1/2}\big)$ {\rm mod 2} is a $C^{\infty}$ deformation invariant then this can be proved by deforming $E$ continuously to be a trivial bundle $\C^{2m+1}$ in the $w_2(E)=0$ case. Then
$\dim H^0\big(C, E\otimes K^{1/2}\big)$ {\rm mod 2} = $(2m+1)\dim H^0\big(C,K^{1/2}\big)$ {\rm mod}~2 = $\dim H^0\big(C,K^{1/2}\big)$ {\rm mod} 2
and similarly for the other case. This is described in~\cite{Oxbury}.

In the case of $K^{1/2}$ alone (the basic Dirac operator) these square roots are classically known as theta characteristics and of the $2^{2g}$ choices of square root, or spin structure, there are $2^{g-1}(2^g-1)$ odd ones and $2^{g-1}(2^g+1)$ even ones. When the parity is odd then we know that there is a~non-zero solution to the Dirac equation.

We shall restrict ourselves to the adjoint representation of ${\rm SL}(2,\C)$, or the rank $3$ vector bundle $\End_0V$ of trace free endomorphisms of a rank $2$ bundle $V$ with symmetric form $\tr AB$. Here $w_2(\End_0V)=\deg V$ mod~2.

\begin{remark}
 It is possible to give a spectral curve description of bundles $V$ with sections $\psi$ of $\End_0V\otimes K^{1/2}$ by taking a covering of $C$ branched over the zeros of a holomorphic one-form as in \cite{Oxbury}, rather than a quadratic differential as in the previous section, but spectral curve methods have the disadvantage of not separating the roles of bundle and section. In the case of Higgs bundles, fixing a stable bundle and fixing a spectral curve give two different Lagrangian submanifolds whose intersection is in general difficult to define, but is of great interest.
 \end{remark}
 In what follows we shall consider the case where $C$ has genus $2$ and try and construct the solutions which are guaranteed by parity.

\subsection{Genus 2 even degree}\label{section4.2}
Let $C$ be a genus $2$ curve expressed as a double covering of $\PP^1$ branched over six points $x_1,\dots, x_6$. We have $2^{g-1}\big(2^g-1\big)=6$ odd spin structures. In higher genus the odd dimension of $H^0\big(C,K^{1/2}\big)$ can vary with the modulus of the curve but for genus~$2$ it must be one. So the line bundles~$K^{1/2}$ have degree $g-1=1$ with a single section $s$ which vanishes at one point and these are the preimages $\tilde x_i$ of the branch points. These then define solutions of the basic Dirac equation on~$C$.

The rank 2 stable bundles $V$ with $\Lambda^2V$ trivial have a moduli space which is three-dimensional and (if we include S-equivalence classes of semistable bundles) can be identified with~$\PP^3$~\cite{NarasimhanM/RamananS}. The tangent space at $[V]$ of the moduli space is naturally isomorphic to $H^1(C, \End_0V)$ and the cotangent space is $H^0(C, \End_0V\otimes K)$ by Serre duality. Since the rank of $\End_0V$ is odd Proposition~\ref{mod2} implies that we have a section $\psi$ of $\End_0V\otimes K^{1/2}$ if $K^{1/2}$ is odd. Then $\Phi=s\psi$ is a section of $\End_0V\otimes K$ which vanishes at~$\tilde x_i$. For generic bundles $E$ there is just a one-dimensional space of sections and this in fact holds in this low genus case.
 We thus have a~distinguished one-dimensional subspace in each cotangent space of~$\PP^3$, at least over the open set of stable bundles.

To find this consider $\alpha \in H^1\big(C,K^{-1}\big)$ and the homomorphism
\begin{displaymath}\alpha\colon \ H^0(C, \End_0 V\otimes K)\rightarrow H^1(C,\End_0 V).\end{displaymath}
The dual space of $H^1\big(C,K^{-1}\big)$ is $H^0\big(C,K^2\big)$ and evaluation of a section of~$K^2$ at~$\tilde x_i$ defines up to a multiple a class~$\alpha_i$ whose kernel is the required one-dimensional space. An alternative description is to take the Serre duality pairing $\big\langle \alpha, \tr \Phi^2\big\rangle$ to define a conic in each projective plane $\PP\big(H^0(C,\End_0 V\otimes K)\big)$ and for $\alpha_i$ this is a pair of lines whose intersection defines the one-dimensional space. Evaluation of this quadratic function was achieved in a very explicit manner in~\cite{Gawedzki} following basic work in~\cite{GeemenB/PreviatoE}.

A section of $K^2$ may be written as $\big(a_0+a_1x+a_2x^2\big){\rm d}x^2/y^2$, where the equation of $C$ is $y^2=(x-x_1)\cdots (x-x_6)$. The cotangent bundle of $\PP^3=\PP(U)$ is written as the (symplectic) quotient $\{(p,q)\in U^*\times U\colon \langle p, q \rangle =0,\, q\ne 0\}/\C^*$, where $\lambda\in \C^*$ acts as $\big(\lambda^{-1} p, \lambda q\big)$. So $q_i$ are homogeneous coordinates in $\PP^3=\PP(U)$ and the $p_i$ linear in the fibres of $T^*\PP^3$.
Then up to a~factor the quadratic map $\tr \Phi^2$ is given in~\cite{Gawedzki} by
\begin{displaymath}h=\sum_{i\ne j}\frac{r_{ij}(q,p)}{(x-x_i)(x-x_j)}{\rm d}x^2=\sum_{i\ne j}\prod_{k\ne i,j}(x-x_k){r_{ij}(q,p)}\frac{{\rm d}x^2}{y^2},\end{displaymath}
where
$r_{12}(q,p)= (q_1p_1 + q_2p_2-q_3p_3-q_4p_4)^2$ and
\begin{alignat*}{3}
& r_{13}(q,p)= (q_1p_4-q_2p_3-q_3p_2 + q_4p_1)^2,\qquad &&
r_{14}(q, p) = -(q_1p_4 + q_2p_3-q_3p_2-q_4p_1)^2,&\\
& r_{15}(q, p) = -(q_1p_3-q_2p_4-q_3p_1 + q_4p_2)^2,\qquad &&
r_{16}(q,p)= (q_1p_3 + q_2p_4 + q_3p_1 + q_4p_2)^2,& \\
& r_{23}(q, p) = -(q_1p_4-q_2p_3 + q_3p_2-q_4p_1)^2,\qquad &&
r_{24}(q,p)= (q_1p_4 + q_2p_3 + q_3p_2 + q_4p_1)^2,& \\
& r_{25}(q,p)=(q_1p_3-q_2p_4 + q_3p_1-q_4p_2)^2,\qquad &&
r_{26}(q, p) = -(q_1p_3 + q_2p_4-q_3p_1-q_4p_2)^2, &\\
& r_{34}(q, p) = (q_1p_1-q_2p_2 + q_3p_3-q_4p_4)^2,\qquad &&
r_{35}(q, p) = (q_1p_2 + q_2p_1 + q_3p_4 + q_4p_3)^2, &\\
& r_{36}(q, p) = -(q_1p_2-q_2p_1 -q_3p_4 + q_4p_3)^2, \qquad &&
r_{45}(q, p) = -(q_1p_2-q_2p_1 + q_3p_4-q_4p_3)^2, &\\
& r_{46}(q, p) = (q_1p_2 + q_2p_1-q_3p_4-q_4p_3)^2,\qquad &&
r_{56}(q, p) = (q_1p_1-q_2p_2-q_3p_3 + q_4p_4)^2.&
\end{alignat*}
Evaluating at $x=x_1$ gives a linear combination of the terms $r_{1i}$. Each of these is a quadratic form $\ell_i\otimes \ell_i$, where $\ell_i(p_1,p_2,p_3,p_4)$ is a linear form. But on examination one sees that $\ell_i(q_2, {-}q_1, q_4,\allowbreak {-}q_3) =0$ for each $i$.
 Hence

\begin{proposition} At the point $[q_1,q_2,q_3,q_4]\in \PP^3$ representing the vector bundle $V$, the cotangent vectors $(p_1,p_2,p_3,p_4)=(q_2, -q_1, q_4, -q_3)$
define the one-dimensional space of Higgs fields which vanish at $\tilde x_1$ or equivalently the holomorphic sections of $\End_0V\otimes K^{1/2}$ for each $V$.
\end{proposition}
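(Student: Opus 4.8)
The plan is to realise the required line as the singular point of the degenerate conic $\Phi\mapsto\langle\alpha_1,\tr\Phi^2\rangle$ on $\PP\big(H^0(C,\End_0V\otimes K)\big)$ and to read it off from the formula of \cite{Gawedzki}. First I would fix the meaning of the pairing: since $\alpha_1\in H^1\big(C,K^{-1}\big)$ is, up to a scalar, the functional on $H^0\big(C,K^2\big)$ given by evaluation at $\tilde x_1$, the number $\langle\alpha_1,\tr\Phi^2\rangle$ is just the value of $h=\tr\Phi^2$ at that point. Writing $h=\sum_{i\ne j}\prod_{k\ne i,j}(x-x_k)\,r_{ij}(q,p)\,{\rm d}x^2/y^2$ and noting that ${\rm d}x^2/y^2=({\rm d}x/y)^2$ is a non-vanishing local frame for $K^2$ at the branch point $\tilde x_1$, every summand with $1\notin\{i,j\}$ carries the factor $(x-x_1)$ and so dies at $x=x_1$. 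Hence evaluation at $\tilde x_1$ retains exactly the terms $r_{1j}$ for $j=2,\dots,6$, each scaled by the non-zero constant $\prod_{k\ne 1,j}(x_1-x_k)$.

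Next I would record that each surviving term is a perfect square, $r_{1j}=\epsilon_j\,\ell_j(p)^2$ with $\ell_j$ linear in $p$ and $\epsilon_j=\pm1$, so that $\langle\alpha_1,\tr\Phi^2\rangle$ is, up to an overall constant, the quadratic form $\sum_{j=2}^{6}c_{1j}\epsilon_j\,\ell_j(p)^2$ on the cotangent space. To link this to the kernel I would use Serre duality: the polarisation of $\Phi\mapsto\langle\alpha_1,\tr\Phi^2\rangle$ is $(\Phi,\Psi)\mapsto\langle\alpha_1\cdot\Phi,\Psi\rangle$, where $\alpha_1\cdot\Phi\in H^1(C,\End_0V)$ and the pairing with $\Psi\in H^0(C,\End_0V\otimes K)$ is the non-degenerate Serre pairing induced by $\tr$. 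Thus $\Phi$ lies in the radical of the quadratic form precisely when $\alpha_1\cdot\Phi=0$, that is, when $\Phi\in\ker\big(\alpha_1\colon H^0(C,\End_0V\otimes K)\to H^1(C,\End_0V)\big)$, the one-dimensional space of Higgs fields vanishing at $\tilde x_1$; projectively this radical is the intersection point of the pair of lines into which the conic degenerates.

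The crux is then the direct check that $p^{*}=(q_2,-q_1,q_4,-q_3)$ is a common zero of all five forms $\ell_2,\dots,\ell_6$: substituting into each of $r_{12},\dots,r_{16}$ makes the terms cancel in pairs, giving $\ell_j(p^{*})=0$ as an identity in $q$, hence for every $V$. Consequently $\sum_j c_{1j}\epsilon_j\,\ell_j(p^{*})^2=0$ together with all its first derivatives, so $p^{*}$ lies in the radical. I would finish by checking the two remaining points: that $p^{*}$ meets the symplectic-quotient constraint $\sum_i p_iq_i=q_2q_1-q_1q_2+q_4q_3-q_3q_4=0$, so it is a genuine cotangent vector, and that $p^{*}$ is non-zero away from the excluded origin of $U$; then, the kernel being one-dimensional, $p^{*}$ spans it at each $[q]$. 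The main obstacle is conceptual rather than computational: one must be sure the kernel is genuinely one-dimensional and coincides with the radical of the explicit form, since only then does exhibiting a single common zero of the $\ell_j$ determine the whole space -- the verification $\ell_j(p^{*})=0$ itself being routine once the formula of \cite{Gawedzki} is granted.
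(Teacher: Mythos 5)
Your proposal is correct and follows essentially the same route as the paper: evaluate Gaw\c{e}dzki's explicit form for $\tr\Phi^2$ at the branch point $x_1$, note that only the terms $r_{1j}$ survive and each is (up to sign) the square of a linear form in $p$, and verify directly that $(p_1,p_2,p_3,p_4)=(q_2,-q_1,q_4,-q_3)$ annihilates all five linear forms, hence spans the singular point of the degenerate conic, i.e., the kernel of $\alpha_1$. Your additional steps -- spelling out via polarisation and Serre duality that the radical of the quadratic form equals $\ker\alpha_1$, and checking the constraint $\langle p,q\rangle=0$ -- merely make explicit what the paper's preceding discussion ("pair of lines whose intersection defines the one-dimensional space") leaves implicit.
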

\begin{remark} The evident symmetry in the formulae above arises from the action $V\mapsto V\otimes U$ of the $2^{2g}=16$ line bundles $U$ with $U^2$ trivial.
\end{remark}
\subsection{Genus 2 odd degree}\label{section4.3}
Suppose now the rank 2 bundle has odd degree. From Proposition \ref{mod2}, $H^0\big(C,\End_0V\otimes K^{1/2}\big)$ has a section if $K^{1/2}$ is one of the $10 =16 - 6$ even spin structures. The latter have no non-zero holomorphic sections.

We use a model of the curve $C$ using $K\otimes K^{1/2}=K^{3/2}$. By Riemann--Roch $K^{3/2}$ has a~two-dimensional space of holomorphic sections as does $K$ and hence together they give a map $C\rightarrow \PP^1\times \PP^1$ so that $C$ is a divisor of $\OO(2,3)$. The line bundle $\OO(1,1)$ embeds $\PP^1\times \PP^1\subset \PP^3$ and so a plane in~$\PP^3$ intersects $C$ in a divisor of~$K^{5/2}$. By Riemann--Roch $\dim H^0\big(C,K^{5/2}\big)=4$ so all such divisors have this form.

Now we need a description of vector bundles of odd degree. Note that $\End_0V$ does not change if we replace $V$ by $V\otimes L$ for a line bundle~$L$, which means we can make use of Atiyah's description of projective bundles in~\cite{Atiyah}.

The construction takes a representative vector bundle as an extension $\OO\rightarrow V\rightarrow L$, where~$L$ has degree~$1$. The extension class lies in a two-dimensional space $H^1(C,L^*)$ and up to a scalar is determined by its annihilator under Serre duality, a section $s$ of $LK$. This vanishes at three points $p$, $q$, $r$ in general distinct. There are four degree zero subbundles and the same projective bundle $\PP(V)$ is defined by three other triples $(p, \sigma(q),\sigma(r)),(\sigma(p),q,\sigma(r)), (\sigma(p),\sigma(q),r)$, where $\sigma\colon C\rightarrow C$ is the hyperelliptic involution.

Given $\psi\in H^0\big(C,\End_0V\otimes K^{1/2}\big)$, its action on the trivial subbundle followed by the projection onto the quotient $L$ gives a holomorphic section $c$ of $LK^{1/2}$. As divisor classes $LK\sim p+q+r$ so $LK^{1/2}\cong K^{-1/2}(p+q+r)\cong K^{5/2}(-\sigma(p)-\sigma(q)-\sigma(r))$ since $x+\sigma(x)$ is the divisor of a~section of~$K$. Thus $c$ is defined by a~plane in $\PP^3$ which passes through the three points $\sigma(p)$, $\sigma(q)$, $\sigma(r)$.
\begin{proposition} Let $V$ be a vector bundle defined by three points $p,q,r\in C\subset \PP^3$. If these points are not collinear, then the plane though $\sigma(p)$, $\sigma(q)$, $\sigma(r)$ uniquely determines a section $\psi \in H^0\big(C,\End_0V\otimes K^{1/2}\big)$.
\end{proposition}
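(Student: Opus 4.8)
The plan is to promote the construction preceding the statement to an isomorphism of cohomology groups. Writing $V$ as the extension $0\to\OO\to V\to L\to 0$ with $\deg L=1$, the bundle $\End_0V$ carries a filtration whose graded pieces are $L^*=\Hom(L,\OO)$ (strictly upper triangular), $\OO$ (the trace-free diagonal), and $L=\Hom(\OO,L)$ (strictly lower triangular). The assignment $\psi\mapsto c$ is exactly the map
$$r\colon\ H^0\big(C,\End_0V\otimes K^{1/2}\big)\longrightarrow H^0\big(C,LK^{1/2}\big)$$
induced by the quotient $\End_0V\to L$. I would prove the proposition by showing that $r$ is injective and that its target is one-dimensional; combined with the existence forced by Proposition~\ref{mod2}, this makes the source one-dimensional and $r$ an isomorphism, so that the unique plane through $\sigma(p),\sigma(q),\sigma(r)$ determines $c$, and hence $\psi$, up to a scalar.

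The first step is to isolate what non-collinearity buys. Because $\sigma^*K\cong K$ and $\sigma^*K^{1/2}\cong K^{1/2}$ (every spin structure on a hyperelliptic curve is $\sigma$-invariant), $\sigma$ acts on $H^0(C,K)$ and on $H^0\big(C,K^{3/2}\big)$ and therefore as a linear involution of $\PP^3$; hence $p,q,r$ are collinear precisely when $\sigma(p),\sigma(q),\sigma(r)$ are. Now $LK^{1/2}$ has degree $2$ and Riemann--Roch gives $\dim H^0\big(C,LK^{1/2}\big)=1+\dim H^0\big(C,K^{1/2}L^{-1}\big)$, while in the plane picture $LK^{1/2}\cong K^{5/2}(-\sigma(p)-\sigma(q)-\sigma(r))$ the value $2$ occurs exactly when the three points lie on a line and the planes through that line sweep out a pencil. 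The degree-zero bundle $K^{1/2}L^{-1}$ being trivial is the same condition $L\cong K^{1/2}$, so non-collinearity is equivalent to $L\not\cong K^{1/2}$, and this single condition yields both $\dim H^0\big(C,LK^{1/2}\big)=1$ (the unique plane) and the vanishing $H^0\big(C,L^*K^{1/2}\big)=0$.

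For injectivity, the kernel of $r$ is $H^0\big(C,W\otimes K^{1/2}\big)$, where $W\subset\End_0V$ is the subbundle of trace-free endomorphisms preserving $\OO$, fitting into $0\to L^*\to W\to\OO\to 0$. Tensoring by $K^{1/2}$ and taking cohomology, the two relevant groups $H^0\big(C,L^*K^{1/2}\big)$ and $H^0\big(C,K^{1/2}\big)$ both vanish -- the former by the previous paragraph, the latter because $K^{1/2}$ is an even spin structure -- so $H^0\big(C,W\otimes K^{1/2}\big)=0$ and $r$ is injective. Since Proposition~\ref{mod2} forces $\dim H^0\big(C,\End_0V\otimes K^{1/2}\big)$ to be odd, hence at least $1$, and the target has dimension $1$, the source must also have dimension $1$ and $r$ is an isomorphism, giving the asserted unique $\psi$. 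I expect the main obstacle to be precisely the injectivity, which rests on the vanishing of the degree-zero group $H^0\big(C,L^*K^{1/2}\big)$; the point that makes it work is the clean translation of the geometric hypothesis (non-collinearity of the three points) into the algebraic statement $L\not\cong K^{1/2}$, after which the filtration bookkeeping and the linearity of $\sigma$ on $\PP^3$ are routine.
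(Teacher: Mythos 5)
Your proposal is correct, and its uniqueness half is essentially the paper's argument repackaged: the paper, too, reduces everything to the two vanishings $H^0\big(C,K^{1/2}\big)=0$ (even spin structure) and $H^0\big(C,L^*K^{1/2}\big)=0$ (degree zero and non-trivial, i.e., $L\not\cong K^{1/2}$, which is exactly your translation of non-collinearity and matches the remark following the proposition). Where you genuinely diverge is the existence step. The paper works in Dolbeault terms with the extension form $\alpha\in\Omega^{0,1}(C,L^*)$ and builds the matrix entries $c$, $a$, $b$ of $\psi$ one at a time: $a$ exists because $H^1\big(C,K^{1/2}\big)=0$, but extending to $b$ meets a genuine obstruction $[\alpha a]\in H^1\big(C,L^*K^{1/2}\big)$, a space dual to the one-dimensional $H^0\big(C,LK^{1/2}\big)=\langle c\rangle$ and hence nonzero; the paper kills it by the integration by parts $\int_C(\alpha a)c=\int_C a(\alpha c)=-\int_C a\bar\partial a=-\int_C\bar\partial\big(a^2/2\big)=0$, which uses the equation $\bar\partial a+\alpha c=0$ and is the point where the trace-free structure of $\End_0V$ actually does work. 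You instead import Proposition~\ref{mod2}: since $\deg V$ is odd, $w_2(\End_0V)\neq 0$, and $K^{1/2}$ is even, so $\dim H^0\big(C,\End_0V\otimes K^{1/2}\big)$ is odd, hence at least one; your injectivity of $r$, via the correct filtration $0\to L^*\to W\to\OO\to 0$ of $\ker\big(\End_0V\to L\big)$, bounds the dimension by $\dim H^0\big(C,LK^{1/2}\big)=1$, forcing $r$ to be an isomorphism. Both arguments are valid. Yours is shorter and delivers the sharp statement $\dim H^0\big(C,\End_0V\otimes K^{1/2}\big)=1$ in one stroke, but it is non-constructive and leans on the mod~2 index theorem, which the paper's own proof does not use (the surrounding text invokes it only as a guarantee); the paper's route is constructive, independently verifies the parity prediction in this case, and exposes the precise mechanism -- the automatic isotropy $\int_C a\bar\partial a=0$ -- by which an a priori nonvanishing obstruction class dies.
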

\begin{proof}
We use the Dolbeault approach for the extension $\OO \rightarrow E\rightarrow L$ by a $C^{\infty}$ splitting and a~form $\alpha\in \Omega^{0,1}(C, L^*)$ defining the extension. Then
\begin{displaymath}\psi=
\begin{pmatrix} a & \hphantom{-}b \cr
c & -a
\end{pmatrix}
\end{displaymath}
is holomorphic if
$\bar\partial c=0$, $\bar\partial a+\alpha c=0$, $\bar\partial b - 2\alpha a=0$,
where $c$ is a holomorphic section of~$LK^{1/2}$, $a$ is a $C^{\infty}$ section of $K^{1/2}$ and $b$ is a section of~$L^*K^{1/2}$.

As above, the plane defines $c$, a section of $K^{5/2}(-\sigma(p)-\sigma(q)-\sigma(r))$ and since $\sigma$ induces a~projective transformation of~$\PP^3$, $\sigma(p)$, $\sigma(q)$, $\sigma(r)$ are not collinear by assumption. Thus there is a unique plane passing though them, or equivalently $\dim H^0\big(C,LK^{1/2}\big)=1$, generated by~$c$.

The term $\alpha c$ represents a class in $H^1\big(C,K^{1/2}\big)$ which by Riemann--Roch has the same dimension as $H^0\big(C,K^{1/2}\big)$. But this is zero since $K^{1/2}$ is an even
spin structure. Then we can solve $\bar\partial a+\alpha c=0$ for $a$. Any two choices differ by a holomorphic section of $K^{1/2}$ so $a$ is unique.

To find $b$ we need $\alpha a$ to define a trivial class in $H^1\big(C, L^*K^{1/2}\big)$. This is dual to the space $H^0\big(C,LK^{1/2}\big)$ which consists of multiples of~$c$.
 So $[\alpha a]=0$ if $\alpha a$ pairs to zero with~$c$.
 But since $\bar\partial a+\alpha c=0$
\[
\int_C (\alpha a) c= \int_C a (\alpha c)=-\int_Ca\big(\bar\partial a\big) =-\int_C\bar\partial \big(a^2/2\big)=0.
\]
Then we can solve $\bar\partial b - 2\alpha a=0$ for $b$. Since $L^*K^{1/2}$ is of degree zero and non-trivial, $b$ is unique.
\end{proof}
\begin{remark}
Note that if the points are collinear, $L K^{1/2}$ has degree $2$ and a two-dimensional space of sections and so must be isomorphic to $K$, or equivalently $L\cong K^{1/2}$. There is then an obvious section $\psi$ given by $a=c=0$, $b=1$.
\end{remark}

\pdfbookmark[1]{References}{ref}
\LastPageEnding

\end{document}